\def\@settitle{\begin{center}
		\baselineskip14\p@\relax
		\bfseries
		\LARGE
		\@title
	\end{center}
}
\newtheorem{theorem}{Theorem}[section]
\newtheorem{lemma}[theorem]{Lemma}
\theoremstyle{definition}
\newtheorem{definition}[theorem]{Definition}
\renewcommand{\geq}{\geqslant}
\renewcommand{\leq}{\leqslant}
\newcommand{\R}{\mathbb{R}}
\newcommand{\Z}{\mathbb{Z}}
\title{Frame set for shifted sinc-function}
\author{Yurii Belov and Andrei V. Semenov}
\thanks{The research was supported by Russian Science Foundation Grant 22-11-00071. The second author was supported by the Ministry of Science and Higher Education of the Russian Federation, agreement no. 075-15-2022-287. Also the second author is a winner of Young Russian Mathematics award and he is grateful to its jury and sponsors.}
\address{Andrei V. Semenov:
St. Petersburg State University, Universitetskaya emb. 7/9,
Saint Petersburg, 199034 Russia}
\email{asemenov.spb.56@gmail.com}
\address{Yurii S. Belov:
Department of Mathematics and Computer Sciences, 
St. Petersburg State University, 14th Line V.O., 29B, 
Saint Petersburg 199178 Russia \\ Yanqi Lake Beijing Institute of Mathematical Sciences and Applications.}
\email{j\_b\_juri\_belov@mail.ru}
\begin{document}
\maketitle


\begin{abstract}
We prove that frame set $\mathcal{F}_g$ for imaginary shift of sinc-function  
$$g(t)=\frac{\sin\pi b(t-iw)}{t-iw}, \quad b,w\in\mathbb{R}\setminus\{0\}$$ 
can be described as $\mathcal{F}_g=\{(\alpha,\beta): \alpha\beta\leq 1, \beta\leq|b|\}.$ \\
In addition, we prove that $\mathcal{F}_g=\{(\alpha,\beta): \alpha\beta\leq 1 \}$  for window functions $g$ of the form $\frac{1}{t-iw}(1-\sum\limits_{k=1}^{\infty}a_ke^{2\pi i b_k t})$, such that $\sum_{k\geq 1}|a_k|e^{2\pi|w|b_k}<1$, $wb_k<0$. 
\end{abstract}

\section{Introduction}

One of the main questions in time-frequency analysis is to find a representation of arbitrary $f\in L^2(\mathbb{R})$ as a sum of well localized functions in time-frequency plane.
We can achieve this goal using Gabor systems. With any function $g\in L^2(\mathbb{R})$ we associate its time-frequency shifts $\pi_{x,w}g$, where $\pi_{x,w}g(t)=e^{2\pi i \omega t}g(t-x)$ for any $(x,\omega) \in\mathbb{R}^2$.

\begin{definition} Let $\alpha,\beta>0$. Put
$$\mathcal{G}(g;\alpha,\beta)=\{\pi_{\alpha n, \beta m} g\}_{m,n\in\mathbb{Z}}.$$
We say that system $\mathcal{G}(g;\alpha,\beta)$ is a Gabor system over the lattice $\alpha \Z\times\beta\mathbb{Z}$ with window $g$. 
\end{definition}

Gabor analysis studies the
question under which conditions on $\alpha$, $\beta$ and $g$ there exist constants $A,B>0$,
such that {\it frame inequality} holds,

\begin{equation}
A\|f\|^2_2\leq \sum_{m,n}|(f, \pi_{\alpha n, \beta m}g)|^2\leq B\|f\|^2_2, \quad f\in L^2(\mathbb{R}).
\label{frameineq}
\end{equation}

In particular, frame inequality implies existence of dual window $\gamma\in L^2(\mathbb{R})$ with two time-frequency representations, see \cite{Gro},

\begin{equation}
f=\sum_{m, n}(f, \pi_{\alpha n, \beta m}g)\pi_{\alpha n, \beta m}\gamma=\sum_{m, n}(f, \pi_{\alpha n, \beta m}\gamma)\pi_{\alpha n, \beta m}g.
\end{equation}

\subsection{Gabor frames} If \eqref{frameineq} holds the system $\mathcal{G}(g;\alpha,\beta)$ is called a {\it Gabor frame}. The set 
$$\mathcal{F}_g =\{(\alpha,\beta): \mathcal{G}(g;\alpha,\beta)  \text{ is a Gabor frame}\}$$
is called {\it the frame set } of $g$.

If $\alpha\beta>1$ then Gabor system is not a frame, see \cite{Gro}. Note that complete characterization of a frame set for the critical hyperbola $\alpha\beta=1$ can be given in terms of {\it Zak transform} $\mathcal{Z}g$ of a window $g$ (see e.g. \cite[Ch. 8]{Gro}). But for $\alpha\beta<1$ a frame set $\mathcal{F}_g$ may have complicated structure even for elementary functions $g$  (see, for example, \cite{BC, Jans, Jans4, DS}). 

The complete description of $\mathcal{F}_g$ has been obtained for the Gaussian $e^{-x^2}$ (see \cite{L,S,SW}), truncated exponential function $\chi_{x>0}e^{-x}$ as well as symmetric exponential function $e^{-|x|}$ (see \cite{Jans2,Jans3}), and for the hyperbolic secant $\frac{1}{e^x+e^{-x}}$ (see \cite{JansStr}). Despite numerous attempts very little progress has been done before 2011. A breakthrough
was achieved in \cite{Gro1} and later in \cite{Gro2} where the authors considered the class of totally
positive functions of finite type and (by using another approach) Gaussian totally positive functions of finite type. Recently first author described frame set for {\it rational functions of Herglotz type} together with A. Kulikov and Yu. Lyubarskii (see \cite{BKL1}).

\subsection{Sum of modulations of Cauchy kernels}
Our goal is to describe frame set for a new class of window functions $g(t)$. We will consider functions of the form
$$g(t)=\frac{1}{t-iw}\biggl{(}\sum \limits_{k=0}^{\infty}a_ke^{2\pi  i b_k t}\biggr{)}, \text{ where }  \sum_k|a_k|<\infty,\quad b_k,w\in\mathbb{R}.$$

For some $P(t)$ and $\alpha\beta<1$ the Gabor system $\mathcal{G}(g;\alpha,\beta)$ is not a frame. In particular, if $P(iw)=0$ and $\sup_k|b_k|<\infty$, then Fourier transform of ${g}$ has a compact support and so the Gabor system $\mathcal{G}(g;\alpha,\beta)$ is not a frame for sufficiently big $\beta$.

On the other hand, we are able to find a some subclass  with maximal possible frame set.

\begin{theorem} Let $g(t) = {\frac{P(t)}{t - iw}}$, where $P(t) = \sum \limits_{k=0}^{\infty} a_k e^{2\pi i b_k t}$ such that $w<0$, $a_0=1, b_0=0$, $b_k>0$ for $k\geq 1$. If
$$\sum_{k=1}^\infty|a_k|e^{2\pi |w|b_k}<1,$$
 then
$\mathcal{F}_g=\{(\alpha,\beta) \mid \alpha\beta\leq 1\}$.
\label{mainth}
\end{theorem}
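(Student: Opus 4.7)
The strategy views $g$ as a multiplicative perturbation of the Cauchy kernel $h(t)=1/(t-iw)$: writing $g=Ph$ with $P(t)=\sum_{k\ge 0}a_ke^{2\pi i b_kt}$, the hypothesis $\sum_{k\ge 1}|a_k|e^{2\pi|w|b_k}<1$ together with $b_k>0$ and $w<0$ forces $|P-1|<1$ on the closed half-plane $\{\operatorname{Im} t\ge w\}$, so that $|P|$ is bounded above and below there by positive constants. In particular $g$ has exactly the same analytic profile as $h$: a single simple pole at $iw$ with nonzero residue $P(iw)$. By \cite{BKL1} we already know $\mathcal{F}_h=\{\alpha\beta\le 1\}$, and the task is to transfer this conclusion to $g$.

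The key identity, obtained by expanding $P(t-\alpha n)=\sum_k a_k e^{-2\pi i b_k\alpha n}e^{2\pi i b_kt}$, is
\begin{equation*}
\pi_{\alpha n,\beta m}g=\sum_{k\ge 0} a_k e^{-2\pi i b_k \alpha n}\,\pi_{\alpha n,\beta m+b_k}h,
\end{equation*}
which represents every Gabor atom of $g$ as a superposition of atoms of $h$ at frequency-shifted modulations. The upper frame bound follows at once by a weighted Cauchy--Schwarz inequality, since $\{\pi_{\alpha n,\beta m+b_k}h\}_{m,n}$ is a Gabor frame with the same constants as $\mathcal{G}(h;\alpha,\beta)$ (modulation by $b_k$ is unitary on $L^2$).

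For the lower frame bound I would split the two regimes. At the critical hyperbola $\alpha\beta=1$, reduce by dilation to $\alpha=\beta=1$ and apply the Zak-transform criterion: the identity above yields
\begin{equation*}
\mathcal{Z}g(x,\xi)=\sum_k a_k e^{2\pi i b_k x}\,\mathcal{Z}h(x,\xi-b_k),
\end{equation*}
and the two-sided bounds on $|\mathcal{Z}h|$ coming from \cite{BKL1}, together with the smallness hypothesis applied term-by-term, should deliver two-sided bounds on $|\mathcal{Z}g|$. For $\alpha\beta<1$ I would invoke the Ron--Shen duality principle, so that the frame property of $\mathcal{G}(g;\alpha,\beta)$ becomes the Riesz-sequence property of $\{\pi_{n/\beta,m/\alpha}g\}_{m,n}$, whose $h$-counterpart is Riesz by \cite{BKL1}. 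Factoring the synthesis operator as $T_g=T_h+\sum_{k\ge 1}a_k M_{b_k}T_h\Phi_k$, where $M_{b_k}$ is modulation on $L^2$ and $\Phi_k:(c_{m,n})\mapsto(c_{m,n}e^{-2\pi i b_k n/\beta})$ is a unitary phase operator on $\ell^2$, the task reduces to bounding the perturbation $T_g-T_h$ in a sense strong enough to preserve the lower Riesz bound of $T_h$.

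The principal obstacle is this operator-norm estimate on the perturbation. The incommensurate frequency shifts $b_k$ preclude a straightforward reduction to the Gram matrix of $h$, and the estimate must not discard the cancellation provided by the exponential decay $V_hh(x,\omega)\asymp e^{-2\pi|w||\omega|}/\sqrt{x^2+w^2}$ of the STFT of the Cauchy kernel at the shifted frequencies. A careful accounting of these cancellations --- via explicit computation of the cross-Gram matrices $\langle\pi_{n'/\beta,m'/\alpha+b_k}h,\pi_{n/\beta,m/\alpha+b_l}h\rangle$ and their $\ell^2$-symbols on the dual lattice --- should produce the sharp threshold $\sum_{k\ge 1}|a_k|e^{2\pi|w|b_k}<1$ which closes the Neumann-series inversion of $I+T_h^{-1}(T_g-T_h)$, yielding the required lower Riesz bound.
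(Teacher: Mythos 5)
Your starting identity $\pi_{\alpha n,\beta m}g=\sum_{k\ge 0}a_ke^{-2\pi i b_k\alpha n}\pi_{\alpha n,\beta m+b_k}h$ is correct and is essentially the same decomposition the paper begins with, and your upper-bound argument is fine. The genuine gap is the lower bound for $\alpha\beta<1$, which you yourself leave open: the Neumann-series inversion of $I+T_h^{-1}(T_g-T_h)$ requires $\|T_g-T_h\|$ to be small relative to the \emph{lower} Riesz bound of $T_h$, not relative to $1$. Since the Gabor system generated by the Cauchy kernel is far from tight (its frame-bound ratio depends on $\alpha$, $\beta$, $w$ and degenerates as $\alpha\beta\to1$), a bound of the form $\|T_g-T_h\|\le\bigl(\sum_{k\ge1}|a_k|e^{2\pi|w|b_k}\bigr)\|T_h\|$ does not close the argument, and the hypothesis $\sum_{k\ge1}|a_k|e^{2\pi|w|b_k}<1$ has no visible route into the cross-Gram estimates you defer to ``careful accounting.'' In other words, the one step you identify as the principal obstacle is precisely the content of the theorem, and nothing in your sketch supplies it.

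The paper avoids this condition-number problem by never comparing $T_g$ to $T_h$ as operators on $L^2(\R)$. Instead it proves a criterion reducing the frame inequality to a two-sided bound for $\sum_k a_kA_kG(t+m+b_k)$ in $L^2(0,\alpha^{-1})$, where $A_k=M_{e^{-2\pi wt}}L_{b_k^\star}M_{e^{2\pi wt}}$ is an explicit weighted circular shift. Decomposing over the fibers $\xi\in(0,\alpha^{-1})$ (values of $G$ along the arithmetic progression $\xi+\alpha^{-1}\Z$), the operator becomes an infinite matrix whose diagonal is exactly $1$ (the term $k=0$, for which $A_0=\mathrm{Id}$) and whose off-diagonal entries in each row are single terms $a_ke^{-2\pi wb_k^\star}$ or $a_ke^{-2\pi w(b_k^\star-\alpha^{-1})}$, each dominated by $|a_k|e^{2\pi|w|b_k}$. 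Diagonal dominance then yields a lower bound with the uniform constant $1-\sum_{k\ge1}|a_k|e^{2\pi|w|b_k}$. It is this exact normalization of the diagonal to $1$, achieved by dividing out the Cauchy-kernel structure before estimating, that turns your hypothesis into a usable bound; without some analogue of it your perturbation scheme will not reach the stated threshold. If you wish to salvage your route, you would need to conjugate $T_h^{-1}(T_g-T_h)$ into a form where $T_h$ effectively becomes the identity fiberwise, which is what the paper's circular-shift calculus accomplishes.
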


Of course this result is still true for finite sums (i.e. trigonometric polynomials $P$). The proof based on properties of circular shift operator  in Paley-Wiener spaces. In particular, this technique allows us to completely describe frame set for imaginary shift of sinc-function.

\begin{theorem} Let
$$g(t)=\frac{\sin\pi b(t-iw)}{t-iw}, \quad b>0, w<0.$$
Then $\mathcal{F}_g=\{(\alpha,\beta) \mid \alpha\beta\leq 1, \beta\leq b\}$.
\label{mainth2}
\end{theorem}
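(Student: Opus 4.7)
An elementary computation gives $\hat g(\xi)=\pi e^{2\pi w\xi}\chi_{[-b/2,b/2]}(\xi)$, which is compactly supported in an interval of length $b$. If $\beta>b$ then the frequency translates $\hat g(\cdot-\beta m)$ leave open intervals of $\mathbb R$ uncovered, and any $f\in L^2(\mathbb R)$ with $\hat f$ supported in such a gap gives $\langle f,\pi_{\alpha n,\beta m}g\rangle=0$ for all $(n,m)$, ruling out the lower frame bound. The condition $\alpha\beta\le1$ is the universal density bound.

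\textbf{Sufficiency, single-band regime.} By Fourier duality it suffices to show $\mathcal G(\hat g;\beta,\alpha)$ is a frame, and Walnut's representation of the corresponding frame operator reads
$$Sf(\xi)=\frac{1}{\alpha}\sum_k G_k(\xi)\,f(\xi-k/\alpha),\qquad G_k(\xi)=\sum_n\hat g(\xi-\beta n)\overline{\hat g(\xi-k/\alpha-\beta n)}.$$
Since $\operatorname{supp}\hat g$ has length $b$, $G_k\equiv0$ as soon as $|k|\ge\alpha b$. In the subregime $\alpha b\le1$ only $k=0$ survives, so $S$ is pointwise multiplication by $\alpha^{-1}G_0$. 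The hypothesis $\beta\le b$ makes the intervals $[\beta n-b/2,\beta n+b/2]$ cover $\mathbb R$, while $e^{4\pi w(\xi-\beta n)}$ is pinched between positive constants on each such interval; hence $G_0$ has two-sided positive bounds, and the frame inequality follows at once.

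\textbf{The hard regime and the main obstacle.} The remaining points are $\alpha b>1$ (which together with $\alpha\beta\le1$ forces $\beta<1/\alpha<b$), where $S$ is genuinely banded with several non-zero $G_k$. A Zak-type conjugation turns it into a matrix-valued operator $M(\xi_0)$ on $\ell^2(\mathbb Z)$ whose entries are localised samples of $e^{2\pi w\xi}$. This is precisely the setting of the circular-shift operator on Paley--Wiener spaces used for Theorem~\ref{mainth}: after modulation by $e^{i\pi bt}$ the window takes the shape $g\propto(1-e^{2\pi bw}e^{2\pi ibt})/(t-iw)$, which fits the template of Theorem~\ref{mainth} except that its coefficient inequality $\sum_{k\ge1}|a_k|e^{2\pi|w|b_k}<1$ degenerates to equality for the shifted sinc. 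This is the main obstacle: the Neumann-type contraction used in Theorem~\ref{mainth} gives operator norm exactly one and by itself produces no invertibility. The plan is to exploit the structural constraint $\beta\le b$ to show that the Toeplitz-like symbol of $M(\xi_0)$ still factors as $|1-u(\xi_0)e^{i\theta}|^2$ with $|u(\xi_0)|<1$ uniformly in $\xi_0$, so its minimum is strictly positive and $M(\xi_0)\ge c\,\mathrm{Id}$. The critical line $\alpha\beta=1$ will be handled in parallel: the Zak transform of $g$ reduces to a finite geometric sum whose common ratio has modulus strictly less than one and therefore cannot vanish, yielding the frame (equivalently Riesz-basis) property there. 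Together with the single-band regime this covers the entire claimed set $\{(\alpha,\beta):\alpha\beta\le1,\beta\le b\}$.
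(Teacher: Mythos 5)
Your necessity argument and your treatment of the regime $\alpha b\le 1$ are correct, and the latter is genuinely different from the paper's route: the paper never uses the Walnut representation, but instead reduces everything (via a Parseval/circular-shift computation in $\mathcal{P}W_{1/\alpha}$) to a two-sided bound for explicit matrix operators $\mathcal{L}_\xi$; in the case $b<1/\alpha$ these matrices have rows $(\dots,0,1,-e^{2\pi w/\alpha},0,\dots)$ or zero rows, and boundedness below becomes exactly your covering condition $\beta\le b$. Your Fourier-multiplier argument ($S=\alpha^{-1}M_{G_0}$ with $G_0\asymp1$) reaches the same conclusion more transparently in that subregime, and it also handles the boundary $\alpha b=1$ of that case cleanly.

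The genuine gap is the regime $\alpha b>1$, for which you state only a plan ("show that the Toeplitz-like symbol still factors as $|1-u(\xi_0)e^{i\theta}|^2$ with $|u(\xi_0)|<1$") without proving it; as written this is the entire remaining content of the theorem and cannot be left as an intention. Moreover, you have slightly misdiagnosed the difficulty: the quantity that degenerates to $1$, namely $|a_1|e^{2\pi|w|b_1}=e^{2\pi wb}e^{-2\pi wb}$, is only the \emph{crude upper bound} used in the hypothesis of Theorem \ref{mainth}; it is not the actual matrix entry. In the paper's circular-shift normalization the off-diagonal entry of $\mathcal{L}'_\xi$ is $-e^{2\pi w(b-b^{*})}$ or $-e^{2\pi w(b-b^{*}+\alpha^{-1})}$ with $b^{*}=b\bmod\alpha^{-1}$, and precisely in the regime $b\ge\alpha^{-1}$ one has $b-b^{*}=\lfloor\alpha b\rfloor/\alpha\ge\alpha^{-1}>0$, so (using $w<0$) the entry has modulus at most $e^{-2\pi|w|/\alpha}<1$ uniformly in $\xi$ and the matrix is strictly diagonally dominant with a single off-diagonal term per row. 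In other words, the case you call hard is the one the paper dispatches immediately; the contraction you are hoping for is already there once the exponent is reduced modulo $\alpha^{-1}$, and no new factorization of a symbol is needed. Your parallel sketch for the critical line $\alpha\beta=1$ via the Zak transform is likewise not carried out (and is unnecessary: the paper's criterion, and the diagonal-dominance/covering arguments, apply verbatim at $\alpha\beta=1$). To complete your proof you should either carry out the $\alpha b>1$ case along these lines or import the paper's Section 2 machinery for it.
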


We normalize the Fourier transform as 
$$\hat{g}(\xi)=\int_\mathbb{R}g(t)e^{-2\pi i t\xi}dt.$$

Note that the  Fourier transform of $\frac{\sin\pi b(t-iw)}{t-iw}$ is supported on the interval $[-b\slash2,b\slash2]$. Since $\mathcal{G}(g, \alpha, \beta)$ is a frame if and only if $\mathcal{G}(\hat{g}, \beta, \alpha)$ is a frame, we have $\beta\leq b$ is necessary condition for inclusion $(\alpha,\beta)\in\mathcal{F}_g$.

\smallskip

It is worth note that non-shifted sinc-function, $w=0$ is a Fourier transform of characteristic function of an interval and so the frame set has very complicated structure, see \cite{DS}. 

\smallskip

Our technique allows us to find an explicit expression for the frame operator $S^{\alpha,\beta}$ corresponding to sinc-function, see \eqref{sincfr} For arbitrary window of the form $P(t)\slash (t-iw)$ see \eqref{pfr}. 

\smallskip

{\bf Organization of the paper and notations.} In Section \ref{MC} we prove the main criterion. In Section \ref{MR} we prove Theorem \ref{mainth}. In Section \ref{SF} we prove Theorem \ref{mainth2}. In Section \ref{FO} we find an explicit expressions for frame operators.

\smallskip

Throughout this paper, $U(z)\lesssim V(z)$ (equivalently $V (z) \gtrsim U(z)$) means that there exists a constant $C$
such that $U(z) \leq CV (z)$ holds for all $z$ in the set in question, which may be either a Hilbert
space or a set of complex numbers (or a suitable index set). We write $U(z) \asymp V (z)$ if both
$U(z)\lesssim V (z)$ and $V (z)\lesssim U(z)$ holds.

\section{Main Criterion \label{MC}}

We define Paley-Wiener space $\mathcal{P}W_a$ as an inverse Fourier transform of a space $L^2(0,a)$,
$$\mathcal{P}W_a=\biggl{\{}f \mid f(t)= \int_0^a\varphi(t)e^{2\pi i zt}dt, \quad \varphi\in L^2(0,a)\biggr{\}}.$$

We need the following {\it circular shift operator} $L_b: L^2(0, \alpha^{-1})\longrightarrow L^2(0,\alpha^{-1})$:
$$L_{b} g = \begin{cases}
{g}(x-b), & x \in [b, \alpha^{-1}],\\
{g}(x - b + \alpha^{-1}), & x \in [0,b],
\end{cases}$$
for $b\in[0,\alpha^{-1})$.
Also put $M_f g = fg$ for $g\in L^2(0,\alpha^{-1})$. 

Define operator $A_k \colon L^2(0, \alpha^{-1}) \longrightarrow L^2(0, \alpha^{-1})$ by the rule
$$A_k f(t):= M_{e^{-2\pi  wt}} L_{b_k^{\star}} M_{e^{2\pi  wt}} f(t) \text{ for any } f \in L^2(0, \alpha^{-1}),$$ 
where 
$b^{\star}_k= b_k \mod {{{1} \over {\alpha}}}$, i.e.
    $$A_k f(t) = \begin{cases}
        e^{-2\pi w b_k^*} f(t-b_k^*), & b_k^* < t \leq \alpha^{-1} \\
        e^{-2\pi w(b_k^* - \alpha^{-1})} f(t - b_k^* + \alpha^{-1}), & 0 \leq t \leq b_k^*.
    \end{cases}$$

Now for the function $$g(z) = (1-e^{\frac{2\pi i z}{\alpha}}) \sum_{n} {{c_n} \over {z-\alpha n}} \in \mathcal{P}W_{1 / \alpha}$$
let us consider operator $\widecheck{L}_b$ given by the formula
$$g(z) \mapsto \big(1-e^{\frac{2\pi i z}{\alpha}} \big) \sum_{n} {{c_n e^{ 2\pi i \alpha b n}} \over {z-\alpha n}}.$$
Also consider operator $S_w$ in $\mathcal{P}W_{1/\alpha}$ defined by the rule $f \mapsto f(z-iw)$.
\begin{lemma}\label{lem:1}
    We have $\mathcal{F} L_b = \widecheck{L}_b$ and $\mathcal{F} M_{e^{-2\pi  wt}} = S_w$.
\end{lemma}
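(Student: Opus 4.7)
The plan is to verify the lemma by direct computation, treating it as two separate intertwining identities between operators on $L^2(0,\alpha^{-1})$ and their conjugates via the Fourier isomorphism $\mathcal{F}:L^2(0,\alpha^{-1})\to\mathcal{P}W_{1/\alpha}$. The second identity is essentially a one-line substitution, while the first one reduces to diagonalizing the circular shift in the orthogonal Fourier basis and reading off what the induced operator on the Paley-Wiener side must be.

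For the modulation identity $\mathcal{F}M_{e^{-2\pi wt}}=S_w\mathcal{F}$, I would substitute directly into the defining integral: for $\varphi\in L^2(0,\alpha^{-1})$,
\begin{equation*}
\mathcal{F}(e^{-2\pi wt}\varphi)(z)=\int_0^{1/\alpha}\varphi(t)e^{-2\pi wt}e^{-2\pi izt}\,dt=\int_0^{1/\alpha}\varphi(t)e^{-2\pi i(z-iw)t}\,dt=(\mathcal{F}\varphi)(z-iw),
\end{equation*}
which is $(S_w\mathcal{F}\varphi)(z)$ by definition.

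For the shift identity $\mathcal{F}L_b=\widecheck{L}_b\mathcal{F}$, I would exploit the fact that the exponentials $\{e^{-2\pi i\alpha nt}\}_{n\in\Z}$ form an orthogonal basis of $L^2(0,\alpha^{-1})$ and are simultaneous eigenvectors of $L_b$: extending each such exponential to a $1/\alpha$-periodic function on $\R$ gives $L_b(e^{-2\pi i\alpha nt})=e^{-2\pi i\alpha n(t-b)}=e^{2\pi i\alpha nb}\cdot e^{-2\pi i\alpha nt}$, so $L_b$ acts by multiplication of the $n$-th Fourier coefficient by $e^{2\pi i\alpha nb}$. A direct computation of $\mathcal{F}(e^{-2\pi i\alpha nt})$ yields, up to a constant, the function $(1-e^{2\pi iz/\alpha})/(z-\alpha n)$ (after re-indexing $n\mapsto-n$), which are exactly the building blocks appearing in the representation of $f\in\mathcal{P}W_{1/\alpha}$ used in the definition of $\widecheck{L}_b$. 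Transporting the diagonal action through $\mathcal{F}$ multiplies each coefficient $c_n$ by $e^{2\pi i\alpha bn}$, which is precisely the formula defining $\widecheck{L}_b$.

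The only genuine technical point is the consistency of the two descriptions of a function in $\mathcal{P}W_{1/\alpha}$: as an integral transform of $\varphi\in L^2(0,\alpha^{-1})$, and as the series $(1-e^{2\pi iz/\alpha})\sum_n c_n/(z-\alpha n)$. These match because the building blocks $(1-e^{2\pi iz/\alpha})/(z-\alpha n)$ vanish at $\alpha m$ for $m\ne n$ and have a removable pole of residue $-2\pi i/\alpha$ at $z=\alpha n$, so by L'H\^opital one reads off $c_n=\frac{i\alpha}{2\pi}f(\alpha n)$; on the other hand the values $f(\alpha n)=\int_0^{1/\alpha}\varphi(t)e^{-2\pi i\alpha nt}\,dt$ are (up to a constant) the Fourier coefficients of $\varphi$. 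Since both functions in the purported identity $\mathcal{F}L_b\varphi$ and $\widecheck{L}_b\mathcal{F}\varphi$ lie in $\mathcal{P}W_{1/\alpha}$ and, by this sampling correspondence, share the same values at all $\alpha m$, they coincide everywhere by uniqueness of sampling in $\mathcal{P}W_{1/\alpha}$. The main care needed in writing this up is bookkeeping of signs and conventions, since the paper silently switches between the $e^{\pm 2\pi izt}$ conventions for $\mathcal{F}$; once a single convention is fixed both identities reduce to the routine computations above.
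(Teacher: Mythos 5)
Your proposal is correct and follows essentially the same route as the paper: both arguments come down to expanding in the exponential basis of $L^2(0,\alpha^{-1})$, noting that the circular shift acts diagonally there with eigenvalues $e^{2\pi i \alpha b n}$ while $\mathcal{F}$ sends these exponentials to the building blocks $(1-e^{2\pi i z/\alpha})/(z-\alpha n)$, and obtaining the modulation identity by direct substitution in the integral. The only difference is that you make explicit (via the sampling relation $c_n=\frac{i\alpha}{2\pi}f(\alpha n)$) the identification of the two descriptions of $\mathcal{P}W_{1/\alpha}$ that the paper dismisses as ``easy to check,'' and you rightly flag the sign/convention bookkeeping that the paper glosses over.
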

\begin{proof}

It is easy to check that $\widecheck{g}(t)=\sum_n c_n e^{2 \pi i \alpha n t}$, $t\in[0,\alpha^{-1})$. On the other hand,

$$(\widecheck{L}_bg)^{\widecheck{}}= \sum_n c_n e^{2\pi i \alpha b n }e^{2 \pi i \alpha n t}= \sum_n c_n e^{2 \pi i \alpha n (t+b)}.$$
So, operator $\widecheck{L}_{b}$ is a circular shift in Fourier domain and we get $\mathcal{F} L_b = \widecheck{L}_b$.

The second statement follows from substitution in the formula for inverse Fourier transform.

\end{proof}

\begin{lemma}\label{lem:2} Let 
$$h(z)= \big( 1-e^{\frac{2\pi i (z-iw)}{\alpha}}  \big) \bigg( \sum\limits_n {{c_{n}  e^{2\pi i b \alpha n}} \over {z-\alpha n - iw}} \bigg)$$
and let 
$$h^{(0)}(z) = \big( 1-e^{\frac{2\pi i (z-iw)}{\alpha}}  \big) \bigg( \sum\limits_n {{c_{n}} \over {z-\alpha n - iw}} \bigg).$$
Then 
$$h(z) =  S_w \widecheck{L}_{b} S_{-w} h^{(0)}(z).$$
\end{lemma}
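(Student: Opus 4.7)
The plan is to prove this lemma by unwinding the definitions of the three operators from the inside out, since the statement really is nothing more than tracking how the coefficients $c_n$ and the poles $\alpha n + iw$ transform under each operation.

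First, I would apply $S_{-w}$ to $h^{(0)}$. By the definition $S_{-w}f(z) = f(z+iw)$, substituting $z \mapsto z+iw$ into $h^{(0)}$ removes the $iw$ in both the exponential factor and the denominators, giving
\begin{equation*}
S_{-w}h^{(0)}(z) \;=\; \Bigl(1-e^{2\pi i z/\alpha}\Bigr)\sum_{n}\frac{c_n}{z-\alpha n}.
\end{equation*}
This is precisely the canonical form to which the operator $\widecheck{L}_b$ was defined to apply: a typical element of $\mathcal{P}W_{1/\alpha}$ expanded in the sinc-type basis with poles on $\alpha\Z$. So this step is essentially a recognition step rather than a computation.

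Next, applying $\widecheck{L}_b$ by its definition multiplies each coefficient $c_n$ by $e^{2\pi i\alpha b n}$, producing
\begin{equation*}
\widecheck{L}_b S_{-w} h^{(0)}(z) \;=\; \Bigl(1-e^{2\pi i z/\alpha}\Bigr)\sum_{n}\frac{c_n\, e^{2\pi i\alpha b n}}{z-\alpha n}.
\end{equation*}
Finally, applying $S_w$ substitutes $z\mapsto z-iw$, which restores $-iw$ in the exponential factor and in each denominator, yielding exactly the expression for $h(z)$ in the statement.

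The only non-cosmetic step is verifying the legitimacy of writing $S_{-w}h^{(0)}$ in the form required by $\widecheck{L}_b$, i.e.\ that it indeed lies in $\mathcal{P}W_{1/\alpha}$ and that its partial-fractions expansion with coefficients $c_n$ is valid in the sense used to define $\widecheck{L}_b$. This is not really an obstacle: it follows from the representation used for $g(z)$ in the paragraph preceding the lemma, together with Lemma~\ref{lem:1}, which identifies $\widecheck{L}_b$ as the Fourier image of the circular shift $L_b$ acting on $L^2(0,\alpha^{-1})$ and thus guarantees that the coefficients $\{c_n\}$ are simply the Fourier coefficients of the corresponding element. Once this identification is in place, the three-step chain above gives the lemma.
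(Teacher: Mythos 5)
Your proof is correct and matches the paper's approach: the paper simply declares the lemma ``obvious by definition,'' and your three-step unwinding of $S_{-w}$, $\widecheck{L}_b$, and $S_w$ is exactly the computation that justification implicitly relies on. The explicit verification of each substitution is accurate, so nothing further is needed.
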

\begin{proof}
   It is obvious by definition.
\end{proof}
Now we are ready to formulate the main criteria for frame inequality.

\begin{theorem} Let $P(t)=\sum \limits_{k=0}^{\infty} a_k e^{2\pi i b_k t}$, such that $ \sum_k|a_k| <\infty$, $w<0$ and $b_k\geq0$ for each $k$. Let $g(t)=\dfrac{P(t)}{t-iw}$. Then the Gabor system $\mathcal{G}(g;\alpha, 1)$ is a frame in $L^2(\mathbb{R})$ if and only if
\begin{equation}
    \sum_{m\in\mathbb{Z}}\int_0^{\frac{1}{\alpha}}\biggl{|}\sum_{k=0}^{\infty} a_k A_k G(t+m+b_k)\biggr{|}^2dt\asymp\|G\|^2_2
\label{maineq}
\end{equation}
 for any $G \in L^2(\R)$.
\end{theorem}

Hereinafter we consider function $G(t+m+b_k)$ as an element of the space $L^2(0,\alpha^{-1})$.

\begin{proof}

Note that 
$$g_{m,n} (t) = \pi_{\alpha n, m} g(t) = e^{2\pi im t} g(t-\alpha n) = {{e^{2\pi im t} \sum\limits_{k=0}^{\infty}  a_ke^{2\pi ib_k(t-\alpha n)}} \over {t - \alpha n - iw}}.$$ 

It is immediate that right hand side of \eqref{maineq} does not exceed $C\|G\|^2$ for some $C>0$. So it is sufficient to prove  $$\sum_{m,n}|(f,g_{m,n})|^2\asymp \sum_{m\in\mathbb{Z}}\int_0^{\frac{1}{\alpha}}\biggl{|}\sum_{k=0}^{\infty}a_k A_{k} f(t+m+b_k)\biggr{|}^2dt, \quad f\in L^2(\mathbb{R}).$$ 
Now consider
$$|(f, g_{m,n})|^2 = |(g_{m,n}, \overline{f})|^2 = \int_{\R} \overline{f(t)} e^{2\pi i mt} \sum_{k=0}^{\infty} {{a_k e^{2\pi i b_k t} e^{-2\pi i b_k \alpha n}} \over {t -\alpha n - iw}} dt.$$


Observe that $\sum \limits_{m,n}|(f,g_{m,n})|^2 = \sup\limits_{\|c_{m,n} \|\leq 1} \sum\limits_{m, n \in \mathbb{Z}} c_{mn} (f, g_{m,n})$, where sequences $c_{mn}$ run over $\ell^2(\Z \times \Z)$. Now we have
$$\sum_{m,n}|(f,g_{m,n})|^2 = \sup_{\|c_{mn}\| \leq 1} \left| \sum_{m, n \in \mathbb{Z}} c_{mn} (f, g_{m,n}) \right| = $$
$$=\sup_{\| c_{mn}\| \leq 1} \int_{\R} \overline{f(t)} \sum_m e^{2\pi i mt}  \bigg( \sum_{k=0}^{\infty} \sum_n {{c_{mn} a_k e^{2\pi i b_k(t - \alpha n)}} \over {t-\alpha n - iw}} \bigg) dt=$$
$$=\sup_{\| c_{mn}\| \leq 1} \int_{\R} {{\overline{f(t)}} \over {1-e^{\frac{2\pi i (t-iw)}{\alpha}}  }} \sum_m e^{2\pi i mt} \big( 1-e^{\frac{2\pi i (t-iw)}{\alpha}}  \big) \bigg( \sum_{k=0}^{\infty} \sum_n {{c_{mn} a_k e^{2\pi i b_k(t - \alpha n)}} \over {t-\alpha n - iw}} \bigg) dt.$$

Put 
$$h_m^{(k)}(t)= a_k e^{2\pi i b_k t} \big( 1-e^{\frac{2\pi i (t-iw)}{\alpha}}  \big) \bigg( \sum\limits_n {{c_{mn}  e^{-2\pi i b_k \alpha n}} \over {t-\alpha n - iw}} \bigg), \quad k \geq 0,$$
and let $h_m^{(0)}(t)$ be the function defined by the rule
    $$h_m^{(0)}(t) = \big( 1-e^{\frac{2\pi i (t-iw)}{\alpha}}  \big) \bigg( \sum\limits_n {{c_{mn}} \over {t-\alpha n - iw}} \bigg).$$
Also define a function $F(t)$ by the formula
$$F(t) = {{f(t)} \over {1-e^{-\frac{2\pi i (t-iw)}{\alpha}}  }}.$$ 
We have $h^{(k)}_m \in \mathcal{P}W_{1/\alpha}$. 

Now applying Lemma \ref{lem:2} we have 
$$h_m^{(k)}(z) =  a_k e^{2\pi i b_k t}  S_w \widecheck{L}_{-{b}^{\star}_k} S_{-w} h_m^{(0)}(z).$$ 
 Hence 
$$\widecheck{h}_m^{(k)}(t) = a_k M_{e^{-2\pi  wt}} L_{-{b}^{\star}_k} M_{e^{2\pi  wt}} \widecheck{h}_m^{(0)}(t-b_k)$$
by Lemma \ref{lem:1}. So one can obtain
$$\sum_{n,m} |(f, g_{m,n})|^2 \asymp \sup_{\| c_{mn}\| \leq 1} \sum_m \int_{\R} \overline{F(t) e^{-2\pi i mt}} \bigg( \sum_{k=0}^{\infty} h_m^{(k)}(t) \bigg) dt = $$
$$=\sup_{\| c_{mn}\| \leq 1} \sum_m \int_{0}^{{{1} \over {\alpha}}} \overline{\widecheck{F}(t+m)} \cdot \bigg( \sum_{k=0}^{\infty} a_k  M_{e^{-2\pi wt}} L_{-b^{\star}_k} M_{e^{2\pi wt}}  \widecheck{h}_m^{(0)}(t-b_k)\bigg) dt.$$
 Here the last equality holds by Parseval identity. We have
$$\sup_{\| c_{mn}\| \leq 1} \sum_m \int_{\R} \sum_{k=0}^{\infty} a_k   M_{e^{-2\pi wt}} L_{b^{\star}_k} M_{e^{2\pi wt}}B_{b_k} \overline{\widecheck{F}(t+m)} \cdot \check{h}_m^{(0)}(t)dt,$$
where $B_k$ stands for shift operator on $L^2(\R)$, $B_k(f)(t) = f(t+k)$. Let $G(t) = \overline{F(t)}$ and observe that when $\{c_{mn}\}_{m,n}$ runs over $\ell^2(\mathbb{Z}^2)$, the function $\check{h}_m^{(0)}(t)$ runs over $L^2[0, 1/ \alpha]$. Note that $A_k = M_{e^{-2\pi wt}} L_{b^{\star}_k} M_{e^{2\pi wt}}$. Finally we obtain
$$\sum_{n,m} |(f, g_{m,n})|^2 \asymp \sup_{\| c_{mn}\| \leq 1} \sum_m \int_{\R}  \sum_{k=0}^{\infty} \overline{a_k  A_k \left( \chi_{[0, 1/\alpha]}(t) G(t+m+b_k) \right)} \check{h}_m^{(0)}(t) dt \asymp$$
$$\asymp \sum_m \int_{0}^{\frac{1}{\alpha}}  \left| \sum_{k=0}^{\infty} a_k  A_k \left(\chi_{[0, 1/\alpha]} (t) G(t+m+b_k) \right) \right|^2 \ dt.$$
Here the upper bound follows from Cauchy inequality and the lower bound can be obtained by substituting sequence $c_{mn}$ corresponding to the function
$$\check{h}_m^{(0)}(t) := \sum_{k=0}^{\infty} a_k  A_k \left( \chi_{[0, 1/\alpha]} (t) G(t+m+b_k) \right),$$
which is in $L^2(0, \alpha^{-1})$ since it can be represented as a series $\sum_k a_k e^{2\pi |w| b_k^*} f_k$, where $\|f_k \|_2$ bounded uniformly by $\|G\|_2$ and each $e^{2\pi |w| b_k^*}$ bounded by $e^{2\pi |w| \alpha^{-1}}$.
\end{proof}


\section{Proof of Theorem \ref{mainth} \label{MR}}

In this section we prove Theorem \ref{mainth}.
By natural re-normalization we have: 
$$\mathcal{G}(g;\alpha,\beta) \text{ is a frame in } L^2(\mathbb{R}) \text{ if and only if } \mathcal{G}(g(\cdot\slash\beta);\alpha\beta,1) \text{ is a frame in } L^2(\mathbb{R}).$$
So, it is sufficient to prove frame inequality for $\beta=1$, $\alpha\leq 1$ and all re-normalizations of parameters $(aw,b_k\slash a), a>0$. 
\smallskip

Let $\beta=1$. We will use main criterion. The upper inequality is straightforward since all the operators $A_k$ are bounded.
In order to prove lower inequality we consider operators $A_k$ in details.

\smallskip

Note that we have
\begin{equation}
A_{k}G(t+m+b_k)= \begin{cases} e^{-2\pi w b^*_k}G(t+m+b_k-b_k{*}) &  \text{ on } (b_k^*, \alpha^{-1}],\\
e^{-2\pi w (b_k^{\star} - {{1} \over {\alpha}})}G(t+m+b_k-b_k{*}+\alpha^{-1}) & \text{ on }  [0, b_k^*].\end{cases}
\label{Ak}
\end{equation}
Since $b_k-b^*_k = \frac{l}{\alpha}$ for some non-negative integer $l$
we conclude that the sum $\sum \limits_{k=0}^{\infty} a_kA_kG(t+m+b_k)$
contains values of $G$ only in one arithmetic progression 
$$\biggl{\{}G\biggl{(}\xi+\frac{l}{\alpha}\biggr{)}\biggr{\}}_{l\in\mathbb{Z}}.$$

For each $\xi \in(0,\alpha^{-1})$ we consider all solutions of the equation $t+m=\xi \text{ mod } \frac{1}{\alpha}$, where $t\in [0,\alpha^{-1})$ and $m\in\mathbb{Z}$. It is countable set since for any $m \in \Z$ there is only finitely many such $t$. Denote by $T_m$ the set of all $t \in (0, \alpha^{-1})$ such that $t+m=\xi \text{ mod } \frac{1}{\alpha}$ and observe that $\# T_m$ bounded uniformly with respect to $m$. Let us consider the operator $\mathcal{L}_\xi: L^2(\mathbb{R})\mapsto \ell^2$ by the rule
\begin{equation}
\mathcal{L}_\xi G=\biggl{\{}\sum_{k=0}^{\infty} a_kA_kG(t+m+b_k)\biggr{\}}_{m \in \Z, \ t \in T_m}
\label{leq}
\end{equation}
Observe that the definition is correct since $\|A_k\|_2$ uniformly bounded with respect to $k$  by some constant $C$ and so for a fixed $k$ we have
$$\sum_{(m,t)} | A_k G(t + m + b_k) |^2 \leq C \|G\|_2^2 = C_1$$
for almost all $\xi$.
Now 
$$\sqrt{\sum_{(m,t)} \left| \sum_{k=0}^{\infty} a_k A_k G(t + m + b_j) \right|^2 }\leq \sum_k \left| \sum_{(m,t)} a_k^2 (A_k G(t+m+b_k))^2\right|^{1/2} \leq C_1 \sum_k |a_k| < \infty.$$
It remains to prove that for some $\varepsilon>0$,
$$\int_0^{1\slash\alpha}\|\mathcal{L}_\xi G\|_{\ell^2}^2d\xi\geq\varepsilon\|G\|^2,\quad G\in L^2(\mathbb{R}).$$
If we consider $\mathcal{L}_\xi$ as a matrix operator from $\ell^2$ to $\ell^2$ it is sufficient to prove that for any sequence $\{x_n\}$,
$$\|\mathcal{L}_\xi (x_n)\|_{\ell^2}\geq \varepsilon \|x_n\|_{\ell^2}.$$
We prove the desired inequality for the operator $\mathcal{L'_\chi}$,

\begin{equation}
\mathcal{L'}_\xi G=\biggl{\{}\sum_{k=0}^{\infty} a_kA_kG(t+m+b_k)\biggr{\}}, \quad t+m = \xi \text{ mod } \frac{1}{\alpha}, \quad t\in [0,1), m\in\mathbb{Z}.
\label{lseq}
\end{equation}
The only difference with  $\mathcal{L_\xi}$ is that we consider $t<1$ and hence for each
$\xi\in\mathbb{R}$ we have a unique representations $\xi=t+m$, where $m\in\mathbb{Z}$. We have 
$$\|\mathcal{L}'_\xi (x_n)\|\leq\|\mathcal{L}_\xi (x_n)\|,$$
since a set $T_m'$ of solutions $t+m = \xi \text{ mod } \alpha^{-1}$ for $t \in [0,1]$ is strictly smaller then a set $T_m$ and so in the evaluation of $ \| \mathcal{L}' (x_n)\|_2 $ we only delete some positive summands which are in the evaluation of $\| \mathcal{L} (x_n) \|_2$. 
So it is sufficient to prove that 
$\|\mathcal{L}'_\xi (x_n)\|_{\ell^2}\geq \varepsilon \|x_n\|_{\ell^2}.$
\smallskip
Now assume for a while that $b_k>\alpha^{-1}$ for $k\geq1$. Then from \eqref{Ak} and the observation $a_0=1$, $b_0=0$ it follows that the matrix operator $\mathcal{L}'_\xi$ may have the form 
$$\mathcal{L}'_{\xi}=\begin{pmatrix} & & ... & &\\
... & 1 & 0 &\star&\star &0 &..\\
... & 0 & 1 & \star & 0 &...\\
... & 0 & 0 & 1 & \star & 0 &...\\
... & 0 & 0 & 0 &  1 & 0 &...\\
\end{pmatrix}.
$$
More precisely, for a fixed row each index $k\geq1$ gives us either $a_ke^{-2\pi w b^{*}_k}$ or 
$a_ke^{-2\pi w(b^{*}_k-\alpha^{-1})}$ in the place $l(k)$, where $l(k)=\alpha(b_k-b^{*}_k)$ or $l(k)=\alpha(b_k-b^{*}_k)+1$:
$$\begin{pmatrix}
    ... & 1 & 0 & 0 &... & 0 & \star & 0 & ...
\end{pmatrix}.$$


If some of $b_k$'s is less than $\frac{1}{\alpha}$, then entries $1$ is replaced by the corresponding sum $\sum \limits_{k=0}^la_ke^{-2\pi w b_k}$, since in this case the place of a $\star$ in the matrix coincides with the place of the entry 1.

\medskip

From $w<0$ and $b^{*}_k<\frac{1}{\alpha}$ we conclude that
$$\max(e^{-2\pi wb^{*}_k}, e^{-2\pi w(b^*_k-\alpha^{-1})})\leq e^{2\pi |w|b_k},$$
and by assumption of theorem we have
$$\sum_{k=1}^{\infty} |a_k|e^{2\pi |w|b_k}<1.$$
Hence, matrix $\mathcal{L}_\xi$ is diagonal-dominant matrix and we get the result.

\medskip

Now we consider the case of arbitrary $\beta>0$. The condition $\sum_{k=1}^\infty|a_k|e^{2\pi |w_k|b_k}<1$
 is invariant with respect to re-normalization $(aw,b_k\slash a), a>0$. Hence, the results holds for all $\alpha\beta\leq1$.
 \qed
 
 \section{Sinc function \label{SF}}

 In this section we prove Theorem \ref{mainth2}.
 \smallskip
 Since frame set is the same for window $e^{2\pi i \omega_0 t}g(t)$ we can assume that
 $$g(t)=\frac{1-e^{2\pi w b}e^{2\pi i bt}}{t-iw}.$$
 Assume for a while what $\beta=1$. 
 
 {\bf Case $b\geq\frac{1}{\alpha}$}. Let us consider matrix operator $\mathcal{L}'_\xi$, see \eqref{lseq},
 $$\mathcal{L}'_{\xi}=\begin{pmatrix} & & ... & &\\
... & 1 & 0 &\star& 0 &0 &..\\
... & 0 & 1 & \star & 0 &...\\
... & 0 & 0 & 1 & \star & 0 &...\\
... & 0 & 0 & 0 &  1 & 0 &...\\
\end{pmatrix}.
$$
Each row of this matrix has only two non-zero entries, namely either
$\{1, -e^{2\pi w(b-b^{*})}\}$ or $\{1, -e^{2\pi w(b-b^{*}+\alpha^{-1})}\}$. We also have $w<0$. Hence the matrix  $\mathcal{L}'_\xi$ is diagonally dominated.

\smallskip

{\bf Case $b<\frac{1}{\alpha}$}. In this case we have to consider matrix $\mathcal{L}_\xi$. Each row of this matrix either has zero form $(\dots 0,0 \dots)$ or has the form $(\dots 0,1,-e^{2\pi w \slash\alpha}),0 \dots)$: the first possibility occurs when 1 and $\star$ in the matrix are on the same place, the second possibility is just a regular case. So operator $\mathcal{L}_\xi$ is bounded from below if and only if there is no trivial column in the matrix. Let us consider covering of the real line by intervals $[m, m+\alpha^{-1}]$, where $m\in\mathbb{Z}$. We have zero column with index $l$ in the matrix if $t\geq b$ for any representation $\xi+\frac{l}{\alpha}=t+m$, where $0\leq t<\frac{1}{\alpha}$, $m\in\mathbb{Z}$. So operators $\mathcal{L}_\xi$ are uniformly bounded from below if and only $b\geq 1$, since in opposite case one can construct function located in the intervals of the form $(b, 1)$ which obviously fails frame inequality.

\medskip
It remains to consider the case of arbitrary $\beta$. Usual re-normalization gives us the condition $\frac{b}{\beta}\geq1$. \qed

\section{Frame operators \label{FO}}

Let us now give an explicit formula for frame operator $S^{\alpha, \beta}$ corresponding to window function $g$ 
of the form $P(t)\slash(t-iw)$, where $P$ is a trigonometric polynomial. In this situation we are able to represent $S^{\alpha, \beta}$ as a finite sum of multiplications and convolutions.

\smallskip

For a fixed parameters $\alpha$ and $\beta$ we denote frame operator as $S$ and consider its action only on the set $C^{\infty}_0(\R)$, which is dense in $L^2(\R)$. Recall that 
$$S f = \sum_{m,n} (f, g_{m,n}) g_{m,n}.$$
Now observe that
$$g_{m,n}(x) = e^{2\pi \beta m x} {{\sum_{k = 0}^{N-1}} a_k e^{2\pi i b_k (x - \alpha n)} \over {x-iw-\alpha n}},$$
and for a smoothly enough function $f(t)$ we have
$$S f(x) = \int_{\R} \sum_{m,n} f(t) e^{-2\pi i m \beta t} {{\sum_{k = 0}^{N-1}} \overline{a}_k e^{-2\pi i b_k (t - \alpha n)} \over {t+iw-\alpha n}} e^{2\pi i m \beta x} {{\sum_{l = 0}^{N-1}} a_l e^{2\pi i b_l (x - \alpha n)} \over {x-iw-\alpha n}} \ dt =$$
$$= \int_{\R} \sum_{m,n} f(t) e^{2\pi i m \beta (x-t)} {{\sum_{k, l = 0}^{N-1}} \overline{a}_k a_l e^{2\pi i (b_l x - b_k t)} e^{2\pi i (b_k - b_l) \alpha n}\over {(t+iw-\alpha n)(x-iw-\alpha n)}} \ dt .$$
Now using the decomposition 
$${{1} \over {(t+iw-\alpha n)(x-iw-\alpha n)}} = {{1} \over {x - t - 2 i w}} \cdot \bigg( {{1} \over {t + iw - \alpha n}} - {{1} \over {x - iw - \alpha n}} \bigg)$$
one can obtain 
$$ S f(x) = \int_{\R} \sum_{m} {{f(t) e^{2\pi i m \beta (x-t)}} \over {x - t - 2 i w}} \sum_{k, l = 0}^{N-1} \overline{a}_k a_l e^{2\pi i (b_l x - b_k t)} \cdot \sum_n \bigg( {{ e^{2\pi i (b_k - b_l) \alpha n}} \over {t+iw-\alpha n}} - {{ e^{2\pi i (b_k - b_l) \alpha n }} \over {x - iw - \alpha n}} \bigg) \ dt.$$

\begin{lemma} For $C\in[0,1)$ we have
    
    $$\sum_{n\in\mathbb{Z}}\biggl{(}\frac{e^{2\pi i Cn}}{z_1-n}-\frac{e^{2\pi i Cn}}{z_2-n}\biggr{)}=2\pi i\frac{e^{2\pi i Cz_1}}{e^{2\pi i z_1}-1}-2\pi i\frac{e^{2\pi i Cz_2}}{e^{2\pi i z_2}-1}.$$
    
    \label{clemma}
\end{lemma}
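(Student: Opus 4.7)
The plan is to capture both sides of the identity as residues of a single meromorphic kernel. I would work with
\[
\Phi(z)=\frac{e^{2\pi i C z}}{e^{2\pi i z}-1}\left(\frac{1}{z_1-z}-\frac{1}{z_2-z}\right),
\]
which has simple poles exactly at $z\in\mathbb{Z}$ and at $z=z_1,z_2$ (the case of integer $z_j$ follows by continuity of both sides in the parameters). A direct residue computation gives
\[
\operatorname{Res}_{z=n}\Phi=\frac{e^{2\pi i Cn}}{2\pi i}\left(\frac{1}{z_1-n}-\frac{1}{z_2-n}\right), \qquad \operatorname{Res}_{z=z_j}\Phi=(-1)^{j}\,\frac{e^{2\pi i C z_j}}{e^{2\pi i z_j}-1},
\]
so $2\pi i\cdot\sum\operatorname{Res}\Phi$ already reproduces the claimed formula, modulo showing that a suitable contour integral vanishes in the limit.

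I would then apply the residue theorem on the squares $\Gamma_N$ with vertices $\pm(N+\tfrac12)\pm i(N+\tfrac12)$ and show $\oint_{\Gamma_N}\Phi(z)\,dz\to 0$ as $N\to\infty$. The geometric factor is easy: $\bigl|\tfrac{1}{z_1-z}-\tfrac{1}{z_2-z}\bigr|=\tfrac{|z_1-z_2|}{|z_1-z|\,|z_2-z|}=O(N^{-2})$ uniformly on $\Gamma_N$. The core estimate is a uniform-in-$N$ bound for $\bigl|\tfrac{e^{2\pi i C z}}{e^{2\pi i z}-1}\bigr|$ on $\Gamma_N$. On the vertical sides the half-integer width forces $e^{2\pi i x}=-1$, so $|e^{2\pi i z}-1|=1+e^{-2\pi y}$, and together with $C\in[0,1)$ this bounds the ratio by a constant. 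On the horizontal sides $|e^{2\pi i z}-1|\asymp 1$ as $y\to+\infty$ and $|e^{2\pi i z}-1|\asymp e^{2\pi|y|}$ as $y\to-\infty$, and the same range of $C$ keeps the ratio uniformly bounded. Combined with the perimeter being of order $N$, this yields $\oint_{\Gamma_N}\Phi=O(N^{-1})\to 0$, and collecting the residues delivers exactly the desired identity. Absolute convergence of the left-hand series is automatic from the telescoping form $\tfrac{1}{z_1-n}-\tfrac{1}{z_2-n}=\tfrac{z_2-z_1}{(z_1-n)(z_2-n)}=O(n^{-2})$, so no conditional-convergence subtlety arises.

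The main obstacle I foresee is verifying the uniform contour bound at the two endpoint regimes: $C=0$, where $|e^{2\pi i C z}|$ does not decay as $\operatorname{Im}z\to+\infty$ and all suppression must come from the denominator, and $C\to 1^{-}$, where the denominator only barely dominates as $\operatorname{Im}z\to-\infty$. Both are covered by the estimates sketched above, but they deserve careful writing. Once these bounds are in place, the rest of the argument is routine contour integration.
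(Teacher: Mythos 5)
Your proof is correct, but it takes a genuinely different route from the paper's. You use the classical residue-summation method: integrate the kernel $\Phi(z)=\frac{e^{2\pi i Cz}}{e^{2\pi i z}-1}\bigl(\frac{1}{z_1-z}-\frac{1}{z_2-z}\bigr)$ over the expanding half-integer squares, observe that the rational factor is $O(N^{-2})$ while the periodic factor stays uniformly bounded on $\Gamma_N$ precisely because $C\in[0,1)$, and read the identity off the vanishing sum of residues. The paper instead runs a Liouville-type argument in the variable $z_1$: the difference of the two sides is entire (the simple poles at $z_1=n$ on both sides carry the same residue $e^{2\pi i Cn}$), grows at most linearly off small disks around the integers, and is bounded on the imaginary axis, hence is a linear function that must be constant; the constant is then identified as zero by setting $z_1=z_2$. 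Your version is longer to write because of the contour estimates, but it is self-contained and yields the constant $2\pi i$ directly; the paper's version is shorter but quietly relies on the same exponential bounds (boundedness of the right-hand side on the imaginary axis, cancellation of residues) that you make explicit on the contour. Two minor points: the identity only makes sense for $z_1,z_2\notin\mathbb{Z}$, since both sides are singular at integer arguments, so your ``by continuity'' aside for integer $z_j$ should simply be dropped; and, as you note, the grouped series converges absolutely, so passing from the symmetric partial sums produced by the squares to the full sum is legitimate.
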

\begin{proof}
The difference between right-hand side and left-hand side is an entire function $F$ with respect to variable $z_1$. On the other hand, the left-hand side is equal to $(z_2-z_1)\sum_ne^{2\pi i Cn}\slash((z_1-n)(z_2-n))$ and, hence, it is bounded by $C_1|z_1|+C_2$ on $\mathbb{C}\setminus\bigcup_n\{z: |z-n|<1\slash10\}$. So, $F$ is a linear function. In addition $F$ is bounded on the imaginary axis. Hence, $F$ is a constant. Substituting $z_1=z_2$ we get the result. 
\end{proof}
Denote by $\{x\}$ the fractional part of a real number $x$.

Using Lemma \ref{clemma} we obtain
$$S f(x) = {{2\pi i} \over {\alpha}} \int_{\R} \sum_{m} {{f(t) e^{2\pi i m \beta (x-t)}} \over {x - t - 2 i w}} \sum_{k, l = 0}^{N-1} \overline{a}_k a_l e^{2\pi i (b_l x - b_k t)} \cdot \bigg( {{e^{2\pi i \{(b_k - b_l) \alpha\} \cdot \frac{t+iw}{\alpha}}} \over {e^{2\pi i \cdot \frac{t+iw}{\alpha}} - 1}} - {{e^{2\pi i \{(b_k - b_l) \alpha\} \cdot \frac{x - iw}{\alpha}}} \over {e^{2\pi i \cdot \frac{x - iw}{\alpha}} - 1}}  \bigg) \ dt. $$

Denote by $h(x)$ the distribution $\frac{1}{x- 2 i w}\sum_{m}  e^{2\pi i m \beta x} $. Using the formula $\{-x\} = 1 - \{x\}$ one can show that
\begin{equation}
S f(x) =  {{2\pi i} \over {\alpha}} \int_{\R} f(t) h(x-t) \sum_{k = 0}^{N-1}  (I_1(k) + I_2(k) + I_3(k)) \ dt, 
\label{pfr}
\end{equation}
where $$I_1(k) = \sum_{b_k > b_l} \overline{a}_k a_l e^{2\pi i (b_l x - b_k t)} \cdot \bigg({{e^{2\pi i \{ (b_k - b_l) \alpha \} \cdot \frac{t+iw}{\alpha} }} \over {e^{2\pi i\frac{t+iw}{\alpha} } - 1}} - {{e^{2\pi i \{ (b_k - b_l) \alpha \} \cdot \frac{x-iw}{\alpha} }} \over {e^{2\pi i\frac{x-iw}{\alpha} } - 1}} \bigg),$$

 $$I_2(k) = \sum_{b_k < b_l} \overline{a}_k a_l e^{2\pi i (b_l x - b_k t)} \cdot \bigg( 
 {{e^{-2\pi i \{ (b_l - b_k) \alpha \} \cdot \frac{t+iw}{\alpha} }} \over {1-e^{-2\pi i\frac{t+iw}{\alpha}}}} - {{e^{-2\pi i \{ (b_k - b_l) \alpha \} \cdot \frac{x-iw}{\alpha} }} \over {1-e^{-2\pi i\frac{x-iw}{\alpha} }}} \bigg),$$
and 
$$I_3(k) = \sum_{k} |a_k|^2 e^{2\pi i b_k (x - t)} \cdot \bigg( 
 {{1} \over {e^{2\pi i\frac{t+iw}{\alpha}}-1}} - {{1} \over {e^{2\pi i\frac{x-iw}{\alpha} } -1}} \bigg).$$

\subsection{Partial case: small parameters}
Let $\alpha \leq {{1 } \over {\max_{0 \leq k,l \leq N-1}} |b_k - b_l|}$. Then we have 
$$\{(b_k - b_l) \alpha \} = \begin{cases}
    (b_k - b_l) \alpha, & b_k > b_l, \\
    1 - (b_l - b_k) \alpha, & b_k < b_l, \\
    0, & b_k = b_l.
\end{cases}$$
 Now one can rewrite formulae as
 $$ I_1(k) = \sum_{b_k > b_l} \overline{a}_k a_l \bigg( \frac{e^{2\pi i (b_l x - b_k t)} e^{2\pi i (b_k - b_l) (t + iw)}}{e^{2\pi i \frac{t+iw}{\alpha}}-1}  - \frac{e^{2\pi i (b_l x - b_k t)} e^{2\pi i (b_k - b_l) (x - iw)}}{e^{2\pi i \frac{x-iw}{\alpha}}-1} \bigg),$$

 $$ I_2(k) = \sum_{b_k < b_l} \overline{a}_k a_l \bigg( \frac{e^{2\pi i (b_l x - b_k t)} e^{2\pi i \frac{t+iw}{\alpha}} e^{2\pi i (b_k - b_l) (t + iw)}}{e^{2\pi i \frac{t+iw}{\alpha}}-1}  - \frac{e^{2\pi i (b_l x - b_k t)} e^{2\pi i \frac{x-iw}{\alpha}} e^{2\pi i (b_k - b_l) (x - iw)}}{e^{2\pi i \frac{x-iw}{\alpha}}-1} \bigg).$$

Put
$$h_1^{(1)}(s) = h(s) \sum_{k=0}^{N-1} \sum_{b_k > b_l} \overline{a}_k a_l e^{2\pi i b_l s} e^{-2\pi w(b_k-b_l)},$$
 $$h_2^{(1)}(s) = h(s) \sum_{k=0}^{N-1} \sum_{b_k < b_l} \overline{a}_k a_l e^{2\pi i b_l s} e^{-2\pi w(b_k-b_l)},$$
 $$h_1^{(2)}(s) = h(s) \sum_{k=0}^{N-1} \sum_{b_k > b_l} \overline{a}_k a_l e^{2\pi i b_k s} e^{2\pi w(b_k-b_l)},$$
 $$h_2^{(2)}(s) = h(s) \sum_{k=0}^{N-1} \sum_{b_k < b_l} \overline{a}_k a_l e^{2\pi i b_k s} e^{2\pi w(b_k-b_l)}.$$
 Applying formulae given above to the definition of $Sf$ one can obtain the following result.
 
 \begin{theorem}
    Let $\alpha \cdot \max\limits_{k,l}|b_k - b_l| \leq 1$, $f \in L^2(\R)$ and functions $h(z), h_i^{(j)}(z)$ be defined above. Then we have
$$ {{\alpha} \over {2\pi i}} Sf(x) = \left( \frac{f(t)}{e^{2\pi i \frac{t+iw}{\alpha}} -1} * h_1^{(1)}\right) - \frac{1}{e^{2\pi i \frac{x-iw}{\alpha}} -1} \left( f* h_2^{(1)}\right) +$$
$$+\left( \frac{f(t)}{1-e^{-2\pi i \frac{t+iw}{\alpha}}} * h_1^{(2)}\right) - \frac{1}{1-e^{-2\pi i \frac{x-iw}{\alpha}}} \left( f* h_2^{(2)}\right) + $$
$$+ \sum_{k=0}^{\infty} \sum_{b_k = b_l} |a_k|^2 e^{2\pi i b_k x} \bigg(\left( \frac{f(t) e^{-2\pi i b_k t}}{e^{2\pi i \frac{t+iw}{\alpha}}-1} * h(s) \right) - \frac{1}{e^{2\pi i \frac{x-iw}{\alpha}}-1} \left( f(t) e^{-2\pi i b_k t } * h(s) \right) \bigg). $$

 \end{theorem}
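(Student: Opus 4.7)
The plan is to start from the already-derived identity \eqref{pfr}, which represents $Sf(x)$ as an integral against the kernel $h(x-t)$ weighted by the sum $\sum_k(I_1(k)+I_2(k)+I_3(k))$ involving the fractional parts $\{(b_k-b_l)\alpha\}$. The small parameter hypothesis $\alpha \cdot \max_{k,l}|b_k - b_l| \leq 1$ lets us replace each fractional part by a closed-form expression, and the remainder of the proof becomes a careful rearrangement of sums and exponentials until the resulting expression is manifestly a finite combination of multiplication operators and convolutions against the four kernels $h_i^{(j)}$.

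First I would plug in the case split for $\{(b_k-b_l)\alpha\}$ given in the excerpt. In the regime $b_k > b_l$ the exponent collapses cleanly, since
$$e^{2\pi i (b_l x - b_k t)}\cdot e^{2\pi i (b_k - b_l)(t + iw)} = e^{2\pi i b_l (x - t)}\cdot e^{-2\pi w(b_k - b_l)},$$
and analogously for the $x - iw$ term one gets $e^{2\pi i b_k(x-t)}\cdot e^{2\pi w(b_k-b_l)}$. This is the key algebraic identity: after the substitution, each summand depends on $t$ and $x$ only through the combination $x-t$ (carrying a factor $e^{2\pi i b_j(x-t)}$) together with a denominator that is a function of $t$ alone or of $x$ alone. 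The analogous computation in the case $b_k < b_l$ uses $\{(b_k-b_l)\alpha\} = 1 - (b_l - b_k)\alpha$, which produces an extra factor $e^{\pm 2\pi i \frac{t+iw}{\alpha}}$ that combines with the denominator $e^{2\pi i \frac{t+iw}{\alpha}} - 1$ to give $1 - e^{-2\pi i \frac{t+iw}{\alpha}}$, explaining the second pair of denominators in the statement.

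Next I would reorganize the double sum over $k, l$. Terms of the form $(\text{function of }t)\cdot(\text{function of }x-t)$ integrated against $f(t)$ are precisely convolutions; terms of the form $(\text{function of }x)\cdot(\text{function of }x-t)$ become multiplications applied to a convolution. Grouping the $e^{2\pi i b_l(x-t)}e^{-2\pi w(b_k-b_l)}$ factors under the sum $\sum_{b_k > b_l}\overline{a}_k a_l$ yields exactly $h_1^{(1)}(x-t)/h(x-t)$; similarly for $h_2^{(1)}, h_1^{(2)}, h_2^{(2)}$. The diagonal part $I_3(k)$ (where $\{(b_k-b_l)\alpha\}=0$) contributes the final sum in the statement, with the modulation $e^{2\pi i b_k(x-t)}$ remaining outside the convolution against $h$.

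The main obstacle is bookkeeping rather than conceptual: keeping the four sign conventions and the two branches ($b_k>b_l$ vs $b_k<b_l$) consistent, correctly distributing the multiplication operators in $x$ versus modulations inside the convolution, and verifying that the overall prefactor $\frac{2\pi i}{\alpha}$ from \eqref{pfr} matches the $\frac{\alpha}{2\pi i}$ that is moved to the left-hand side of the displayed identity. Once the factorization above is carried out row by row, pairing each of the four $h_i^{(j)}$-summands with its proper denominator, the statement follows directly.
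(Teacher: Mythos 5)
Your proposal is correct and follows the paper's own route exactly: the paper's proof of this theorem consists precisely of substituting the closed form of $\{(b_k-b_l)\alpha\}$ into \eqref{pfr}, applying the exponential identity you state, and regrouping the resulting terms into convolutions and multiplications against the kernels $h_i^{(j)}$. Your key identity $e^{2\pi i(b_lx-b_kt)}e^{2\pi i(b_k-b_l)(t+iw)}=e^{2\pi i b_l(x-t)}e^{-2\pi w(b_k-b_l)}$ (and its analogue for the $x-iw$ terms) is exactly the computation the paper leaves implicit when it says ``applying formulae given above one can obtain the result.''
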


\subsection{Sinc-function}
Consider $g(z) = \frac{\sin \pi (z - iw)}{\pi (z - iw)}$. Then we need to examine a trigonometric polynomial $P(t) = 1 - e^{2\pi w} e^{2\pi i t}$ and a function 
$$g_{m,n}(x) = e^{2\pi i m \beta x} \cdot \frac{1 - e^{2\pi w} e^{2\pi i(x - \alpha n)}}{x - iw - \alpha n}.$$
In this case we obtain 

$$Sf(x) = \int_{\R} \sum_m \frac{f(t)e^{2\pi i m \beta (x-t)}}{x-t-2i w} \cdot \bigg( (1+e^{4\pi w} e^{2\pi i (x-t)}) \sum_n \left( \frac{1}{t + iw - \alpha n} - \frac{1}{x-iw-\alpha n} \right) 
$$
\begin{equation}
- e^{2\pi w} \sum_n \bigg( \frac{e^{-2\pi i (t - \alpha n)} + e^{2\pi i(x - \alpha n)}}{t + iw - \alpha n} - \frac{e^{-2\pi i (t - \alpha n)} + e^{2\pi i(x - \alpha n)}}{x-iw - \alpha n} \bigg) \bigg)\ dt.
\end{equation}
Now using similar technique and expressing the sums one can show that
$$Sf(x) = S_R f(x) + {{2\pi i} \over {\alpha}} e^{2\pi w} \int_{\R} \sum_m \frac{f(t)e^{2\pi i m \beta (x-t)}}{x - t - 2iw} \bigg( e^{-2\pi i t} \bigg( \frac{e^{2\pi i \{\alpha \} \frac{x-iw}{\alpha}}}{e^{2\pi i \frac{x-iw}{\alpha}} -1} -\frac{e^{2\pi i \{\alpha \} \frac{t+iw}{\alpha}}}{e^{2\pi i \frac{t+iw}{\alpha}} -1} \bigg) +$$
\begin{equation}
+e^{2\pi i x} \bigg( \frac{e^{-2\pi i \{\alpha \} \frac{x-iw}{\alpha}}}{1-e^{-2\pi i \frac{x-iw}{\alpha}}} -\frac{-e^{2\pi i \{\alpha \} \frac{t+iw}{\alpha}}}{1-e^{-2\pi i \frac{t+iw}{\alpha}}} \bigg) \bigg) \ dt,
\label{sincfr}
\end{equation}
where the operator $S_R f(x)$ can be represented using similar technique as \cite[page 8]{BKL2}:
$$S_R f(x) = {{\pi} \over {\alpha}} \sin^{-1} \left( {{\pi} \over {\alpha}} (x -iw) \right) \left( f(t) \sin^{-1} \left( {{\pi} \over {\alpha}} (t+ iw) \right)  * h_1\right),$$
where $$h_1(s) = \sum_m e^{2\pi i \beta m s} \frac{\sin \left( \frac{\pi}{\alpha} (s - 2iw) \right) }{s-2iw} \cdot (1+ e^{4\pi w} e^{2\pi i s}).$$

 \subsection{Sinc-function with $\alpha < 1$}
In the case of $\alpha < 1$ one can express frame operator in a more simpler form. Let 
$$h_3(x) = \frac{-1+e^{4\pi w} e^{2\pi i x}}{x-2iw}\sum_m e^{2\pi i m \beta x}.$$
\begin{theorem}
    Let $g(x) = \frac{1-e^{2\pi w} e^{2\pi i x}}{x - iw}$ and $\alpha <1$. Then $Sf(x) = \frac{2\pi i}{\alpha} f * h_3.$
\end{theorem}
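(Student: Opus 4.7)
The plan is to start from the unsimplified integral representation of $Sf$ for the sinc window (the first displayed formula of Section~4.2, before the manipulation into \eqref{sincfr}) and to apply Lemma~\ref{clemma} directly, exploiting the fact that the hypothesis $\alpha<1$ gives $\{\alpha\}=\alpha$ and $\{-\alpha\}=1-\alpha$, both of which lie in $[0,1)$ so that the lemma is applicable to every lattice sum that appears.

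First I would evaluate the three lattice sums in the integrand. Setting $T=e^{2\pi i(t+iw)/\alpha}$ and $X=e^{2\pi i(x-iw)/\alpha}$, the sum in front of $(1+e^{4\pi w}e^{2\pi i(x-t)})$ corresponds to $C=0$ and gives $\tfrac{2\pi i}{\alpha}\bigl(\tfrac{1}{T-1}-\tfrac{1}{X-1}\bigr)$. The piece $\sum_n \tfrac{e^{-2\pi i(t-\alpha n)}}{t+iw-\alpha n}-\tfrac{e^{-2\pi i(t-\alpha n)}}{x-iw-\alpha n}$ has phase $e^{2\pi i\alpha n}$, so Lemma~\ref{clemma} applies with $C=\alpha$; using $e^{-2\pi i\cdot iw}=e^{2\pi w}$ it evaluates to $\tfrac{2\pi i}{\alpha}\bigl(\tfrac{e^{-2\pi w}}{T-1}-\tfrac{e^{2\pi w}e^{2\pi i(x-t)}}{X-1}\bigr)$. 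The companion piece with $e^{2\pi i(x-\alpha n)}$ has phase $e^{-2\pi i\alpha n}$, requiring $C=1-\alpha$; after applying the lemma and using the identity $\tfrac{X}{X-1}=1+\tfrac{1}{X-1}$ (and its analog for $T$) to strip the leading exponential factors, this sum equals $\tfrac{2\pi i}{\alpha}\bigl(e^{2\pi w}e^{2\pi i(x-t)}+\tfrac{e^{2\pi w}e^{2\pi i(x-t)}}{T-1}-e^{-2\pi w}-\tfrac{e^{-2\pi w}}{X-1}\bigr)$.

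Next I would substitute all three expressions into the bracket $[A]$ of the integrand, multiplying the first sum by $(1+e^{4\pi w}e^{2\pi i(x-t)})$ and the combined second/third sums by $-e^{2\pi w}$. The resulting expression is a linear combination of the four singular terms $\tfrac{1}{T-1}$, $\tfrac{1}{X-1}$, $\tfrac{e^{4\pi w}e^{2\pi i(x-t)}}{T-1}$, $\tfrac{e^{4\pi w}e^{2\pi i(x-t)}}{X-1}$ together with the constants $1$ and $e^{4\pi w}e^{2\pi i(x-t)}$. The crucial cancellation is that each of the four pole terms appears with equal and opposite coefficients, so the poles vanish and only the constants survive, leaving $[A]=\tfrac{2\pi i}{\alpha}\bigl(1-e^{4\pi w}e^{2\pi i(x-t)}\bigr)$. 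Inserting this back into the expression for $Sf$ and recognizing the result as a convolution of $f$ with $\tfrac{1-e^{4\pi w}e^{2\pi is}}{s-2iw}\sum_m e^{2\pi i m\beta s}=\pm h_3(s)$ delivers the claim.

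The main obstacle will be the bookkeeping in the expansion: roughly eight signed summands must be collected and matched pairwise, and the maneuver $\tfrac{X}{X-1}=1+\tfrac{1}{X-1}$ has to be carried out consistently for both $T$ and $X$ so that the constant remainders $\pm 1$ and $\pm e^{4\pi w}e^{2\pi i(x-t)}$ are correctly harvested. Once this algebra is done the statement is immediate; there is no analytic subtlety because $f\in C_0^\infty(\R)$ and the series $\sum_m e^{2\pi im\beta s}$ is handled as a distribution exactly as in the derivation preceding \eqref{sincfr}.
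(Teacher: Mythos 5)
Your proposal is correct and follows essentially the same route as the paper: apply Lemma \ref{clemma} with $C=0$, $C=\alpha$ and $C=1-\alpha$ (valid because $\alpha<1$), collect everything over the common denominators $e^{2\pi i(t+iw)/\alpha}-1$ and $e^{2\pi i(x-iw)/\alpha}-1$, and observe that the singular parts cancel, leaving only the constant $1-e^{4\pi w}e^{2\pi i(x-t)}$, which is exactly the computation the paper performs after grouping by denominator. Your hedge ``$\pm h_3$'' is warranted: the paper's own intermediate step also produces $1-e^{4\pi w}e^{2\pi i(x-t)}$ while its final display carries the opposite sign $e^{4\pi w}e^{2\pi i(x-t)}-1$, so the sign discrepancy lies in the paper rather than in your argument.
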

\begin{proof}
    In this case we have 
    $$\frac{\alpha}{2\pi i} Sf(x) = \int_{\R} \sum_m f(t) \frac{e^{2\pi i m \beta (x-t)}}{x-t-2iw} \bigg( (1+e^{4\pi w} e^{2\pi i (x-t)} ) \cdot \bigg( \frac{1}{e^{2\pi i \frac{t+iw}{\alpha}}-1} - \frac{1}{e^{2\pi i \frac{x-iw}{\alpha}}-1}\bigg) -$$
    $$- e^{2\pi w} \cdot \bigg( \frac{e^{-2\pi w}}{e^{2\pi i \frac{t+iw}{\alpha}} -1} + \frac{e^{2\pi i x} e^{-2\pi i (t+iw)} e^{2\pi i \frac{t+iw}{\alpha}}}{e^{2\pi i \frac{t+iw}{\alpha}} -1} \bigg) +$$
    $$+e^{2\pi w} \cdot\bigg( \frac{e^{-2\pi i t} e^{2\pi i x} e^{2\pi w}}{e^{2\pi i \frac{x-iw}{\alpha}} -1} + \frac{e^{2\pi i x} e^{-2\pi i (x-iw)} e^{2\pi i \frac{x-iw}{\alpha}}}{e^{2\pi i \frac{x-iw}{\alpha}} -1}  \bigg) \ dt.$$

Now sum of the items belonging to $\frac{1}{e^{2\pi i \frac{t+iw}{\alpha}} -1}$ equals to 
$$e^{4\pi w} e^{2\pi i (x-t)} (1 - e^{2\pi i \frac{t+iw}{\alpha}}),$$
and sum of the items belonging to $\frac{1}{e^{2\pi i \frac{x-iw}{\alpha}} -1}$ equals to $-1 + e^{2\pi i \frac{x-iw }{\alpha}}$. So we obtain
$$\frac{\alpha}{2\pi i} Sf(x) = \int_{\R} \sum_m f(t) \frac{e^{2\pi i m \beta (x-t)}}{x-t-2iw} (e^{4\pi w} e^{2\pi i (x-t)} -1) \ dt.$$

 \end{proof}

\subsection*{Acknowledgments}
We express our gratitude to Beijing Institute of Mathematical Sciences and Applications for hospitality.

\end{document}